\documentclass[12pt,reqno]{amsart}

\usepackage[utf8]{inputenc}

\usepackage[normalem]{ulem}
\usepackage{ulem}
\usepackage[french,english]{babel}

\textwidth16cm

\textheight21cm
\oddsidemargin0cm
\evensidemargin0cm
\footskip2.5cm

\DeclareMathAlphabet{\mathpzc}{OT1}{pzc}{m}{it}

\usepackage{changepage, array}
\usepackage{xcolor}
\usepackage{graphicx}
\usepackage{comment}
\usepackage{hyperref}
\usepackage{enumerate}

\setlength{\marginparwidth}{2cm}


\author{Hung Yean Loke}
\address{National University of Singapore, Science Drive 2, Singapore 117543}
\email{matlhy@nus.edu.sg}
\author{Tomasz Przebinda}
\address{Department of Mathematics, University of Oklahoma, Norman, OK 73019, USA}
\email{tprzebinda@ou.edu}

\title[The big theta]{The big theta}



\def\rSp{\mathrm{Sp}}
\def\tSp{\widetilde{\mathrm{Sp}}}
\def\tU{\widetilde{\mathrm{U}}}

\def\g{\mathfrak g}

\def\u{\mathfrak u}
\def\sp{\mathfrak {sp}}

\def\s{\mathfrak s}
\def\p{\mathfrak p}

\def\p{\mathfrak p}

\def\F{\mathbb{F}}

\def\Pp{\mathcal{P}}

\def\R{\mathbb{R}}
\def\C{\mathbb{C}}

\def\ss1{\mathfrak s_{\overline 1}}

\def\hs1{\mathfrak h_{\overline 1}}

\def\calH{\mathcal{H}}
\def\tG{\widetilde{G}}
\def\tK{\widetilde{K}}

\def\Ker{\mathrm{Ker}}

\def\G{\mathrm{G}}

\def\K{\mathrm{K}}

\def\Bbb{\mathbb}

\def\Sp{\mathrm{Sp}}

\def\rSp{\mathrm{Sp}}

\def\wt{\widetilde}


\def\Wv{\mathrm{W}}
\def\Vv{\mathrm{V}}
\def\Uv{\mathrm{U}}
\def\V{\mathsf{V}}






\def\End{\mathop{\hbox{\rm End}}\nolimits}

\def\Hom{\mathrm{Hom}}

\def\lim{\mathop{\hbox{\rm lim}}\nolimits}



\def\Z{\mathcal{Z}}

\def\Ss{\mathcal{S}}
\def\calS{\mathcal{S}}

\def\Nv{\mathrm{N}}


%

\def\fonttitre{\textsf}
\newcounter{thh}

\newtheorem{thm}[thh]{\fonttitre{Theorem}}

\newtheorem{pro}[thh]{\fonttitre{Proposition}}
\newtheorem*{pro*}{\fonttitre{Proposition}}

\newtheorem*{coro*}{\fonttitre{Corollary}}
\newtheorem{lem}[thh]{\fonttitre{Lemma}}

\newtheorem*{defi*}{\fonttitre{Definition}}

\newtheorem*{nota*}{\fonttitre{Notation}}

\theoremstyle{remark}

\def\muet{ \ifthenelse{\equal{a}{b}}}

\def\Z'{\Bbb{Z}'}

\def\biblio{\sloppy
\bibliographystyle{alpha}
\bibliography{article}}

\begin{document}

\date{}
\subjclass[2010]{Primary: 22E46; secondary: 22E47} 
\keywords{Howe's correspondence, reductive dual pairs over archimedean and non archimedean local fields.}

\date{\today}
\maketitle
\begin{abstract}
We show that Howe's big quotient is obtained via the tensoring over appropriate algebra. 
\end{abstract}

\def\v0{v_0}
\def\u0{u_0}
\def\sm{\mathrm {sum}}

\section{\bf Introduction}

Consider an irreducible dual pair $\tG\cdot \tG'$ in a metaplectic group $\tSp$ over a non-archimedean local field, \cite{MoViWa}. Assume that the members of the dual pair are of equal rank. Consider an irreducible tempered representation $\pi$ of $\tG$ and an irreducible tempered representation $\pi'$ of $\tG'$ in Howe's correspondence. In
\cite{MeslandSengun1} the authors show that $\pi'$ can be obtained from $\pi$ via tensoring the space of the smooth vectors of the Weil representation with $\pi$ over the reduced $C^*$ algebra of $\tG$. Here $\pi'$ happens to be the maximal Howe quotients (see \eqref{padichowemaximalquotient} below). 
In this note, we show that an analogous result holds in general, i.e. that the maximal Howe quotient (also known as the Big Theta) is always obtained via tensoring of the space of the smooth vectors of the Weil representation with the given representation over an appropriate (and obvious) algebra (Theorem \ref{padicbigtheta} below). 

We also verify the analogous result for the real reductive dual pairs (Theorem \ref{realhowebigquotient} below).

\section{\bf The $p$-adic big Theta}

\begin{pro} \label{Pdirectsum}
Let $\Ss$ be a complex vector space with a positive definite Hermitian form $( \, , \, )_{\Ss}$. Let $V$ be a finite dimensional subspace of $\Ss$. Then $\Ss = V \oplus V^\bot$ where $V^\bot$ is the orthogonal complement of $V$ in $\Ss$. Moreover the orthogonal project ${\mathrm{pr}}_V \colon \Ss \rightarrow V$ is a well defined linear transformation.
\end{pro}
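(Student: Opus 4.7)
The plan is to reduce everything to the existence of an orthonormal basis of $V$, which is available since $V$ is finite dimensional and the Hermitian form restricts to a positive definite form on $V$.

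First I would apply the Gram--Schmidt procedure to any basis of $V$ to obtain an orthonormal basis $e_1, \ldots, e_n$ of $V$ (this uses only positive definiteness restricted to $V$, so the square roots involved are real and positive). Then, for any $v \in \Ss$, I would define
\[
\mathrm{pr}_V(v) \;=\; \sum_{i=1}^{n} (v, e_i)_{\Ss}\, e_i.
\]
This is manifestly a linear map $\Ss \to V$ (linearity in $v$ comes from sesquilinearity of $(\,,\,)_\Ss$ in the appropriate slot).

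Next I would check that $v - \mathrm{pr}_V(v) \in V^\bot$. Indeed, for each $j$, expanding $(v - \mathrm{pr}_V(v), e_j)_\Ss$ and using orthonormality of the $e_i$'s gives $(v,e_j)_\Ss - (v,e_j)_\Ss = 0$, so $v - \mathrm{pr}_V(v)$ is orthogonal to a basis of $V$, hence to all of $V$. This produces the decomposition $v = \mathrm{pr}_V(v) + (v - \mathrm{pr}_V(v))$, showing $\Ss = V + V^\bot$. To upgrade the sum to a direct sum, I would take $w \in V \cap V^\bot$ and observe $(w,w)_\Ss = 0$, so positive definiteness forces $w = 0$.

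There is no substantive obstacle here: the only non-formal ingredient is Gram--Schmidt, which requires finite dimensionality of $V$ (to terminate) and positive definiteness (so the norms are nonzero and one may divide by them). Completeness of $\Ss$ plays no role precisely because $V$ is finite dimensional, which is why the statement holds for an arbitrary pre-Hilbert space $\Ss$.
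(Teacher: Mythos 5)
Your proof is correct and follows essentially the same route as the paper: choose an orthonormal basis of $V$ (via Gram--Schmidt), define the projection by the explicit formula $\sum_i (v,e_i)_\Ss e_i$, and verify that the remainder lies in $V^\bot$. Your explicit check that $V\cap V^\bot=0$ via positive definiteness is a small addition the paper leaves implicit, but the argument is the same.
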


\begin{proof}
It is clear that $\Ss \supseteq V \oplus V^\bot$.
It remains to show that for $w \in \Ss$, we have a decomposition $w = v + v'$ for some $v \in V$ and $v' \in V^\bot$.

Let $\{ v_1, \ldots, v_r \}$ be an orthonormal basis of $V$. Using the Gram-Schmidt process, we define
\[
v = (w, v_1)_{\Ss} v_1 +  (w, v_2)_{\Ss} v_2 + \cdots + (w, v_r)_{\Ss} v_r \in V.
\]
It is not hard to see that $(w - v, v_j)_{\Ss} = 0$ for $j = 1, \ldots, r$.
Hence $v' = w - v \in V^\bot$ and $w = v + v'$.
\end{proof}

Let $\F$ be a $p$-adic field. 
Let $\Wv$ be a finite dimensional symplectic vector space over $\F$.
Let $\tSp(\Wv)$ be the metalplectic double cover of $\rSp(\Wv)$.
Let $G \cdot G'$ be an irreducible dual pair $\rSp(\Wv)$, \cite[Lemma III.3]{MoViWa}.
Let $\tG$ and $\tG'$ be the inverse image of $\G$ and $\G'$ in the metalplectic double cover of $\tSp(\Wv)$.

\medskip

Fix a non-trivial unitary character $\chi$ of the additive group of the field $\F$ and
denote by  $\Ss$ be the space of the smooth vectors in the Weil representation of $\tSp(\Wv)$ associated to $\chi$. There is a positive definite $\tSp(\Wv)$-invariant inner product $(\ ,\ )_{\Ss}$ on $\Ss$.
Let $(\rho, V)$ be a genuine smooth irreducible representation of $\tG$.

\begin{lem} \label{L2}
Suppose $V\subseteq \Ss$  is a $\tG$ invariant subspace, such that the resulting representation of $\tG$ is smooth and irreducible. Then
\[
\calS = V \oplus V^\bot.
\]
\end{lem}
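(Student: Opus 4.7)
The plan is to reduce, via smoothness, to a finite-dimensional problem where Proposition \ref{Pdirectsum} applies, and then to upgrade the resulting local orthogonality to global orthogonality using the invariance of the Hermitian form.

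First, given $w \in \calS$, I would choose a compact open subgroup $K \subseteq \tG$ such that $w$ is fixed by $K$; this exists because the Weil representation on $\calS$ is smooth. Because $(\rho,V)$ is an irreducible smooth representation of the reductive $p$-adic group $\tG$, it is admissible (Jacquet--Bernstein), so the space $V^K$ of $K$-fixed vectors in $V$ is finite dimensional. Since $V^K \subseteq \calS^K \subseteq \calS$ and $(\, ,\,)_{\calS}$ is positive definite, Proposition \ref{Pdirectsum} yields the orthogonal projection $v = \mathrm{pr}_{V^K}(w) \in V^K \subseteq V$, and we set $v' = w - v$. By construction $v' \in \calS$ and $(v', u)_{\calS}=0$ for every $u \in V^K$.

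The substantive step is to promote this to $v' \in V^\bot$, i.e.\ to show $(v', u)_{\calS}=0$ for \emph{every} $u \in V$, not just $u \in V^K$. This is where I would use the $\tSp(\Wv)$-invariance of the inner product together with the averaging operator $e_K(u) = \int_K \rho(k) u\, dk$ (normalized Haar measure on $K$), which is well defined by smoothness of $(\rho, V)$ and lands in $V^K$. Given any $u \in V$, both $w$ and $v$ (hence $v'$) are $K$-fixed, so invariance of the form gives
\[
(v', u)_{\calS} = \int_K (\rho(k) v', \rho(k) u)_{\calS}\, dk = \int_K (v', \rho(k) u)_{\calS}\, dk = (v', e_K(u))_{\calS}.
\]
Since $e_K(u) \in V^K$, the right-hand side vanishes, so $v' \in V^\bot$ and $w = v + v'$ is the required decomposition. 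Combined with $V \cap V^\bot = 0$ (from positivity of the form), this gives $\calS = V \oplus V^\bot$.

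The main obstacle is precisely the passage from orthogonality against $V^K$ to orthogonality against all of $V$, since $V$ is typically infinite dimensional and Proposition \ref{Pdirectsum} cannot be applied to $V$ directly. The averaging argument above circumvents this, but it relies essentially on two ingredients: admissibility of $(\rho, V)$ (to make $V^K$ finite dimensional so that Proposition \ref{Pdirectsum} applies inside $\calS^K$), and the $\tG$-invariance of $(\, ,\,)_{\calS}$ inherited from the global $\tSp(\Wv)$-invariance.
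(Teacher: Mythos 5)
Your proof is correct and follows essentially the same route as the paper: pick a compact open $K$ fixing $w$, use admissibility so that $V^K$ is finite dimensional, apply Proposition \ref{Pdirectsum} to get $w=v+v'$ with $v'$ orthogonal to $V^K$, and then average over $K$ using the $\tG$-invariance of $(\,,\,)_{\Ss}$ to conclude $v'\in V^\bot$. The only cosmetic difference is that the paper obtains $K$ as $L\cap\tG$ for a compact open $L\subseteq\tSp(\Wv)$ fixing $w$ and works inside $\calS^K$, which changes nothing of substance.
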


\begin{proof}
Let $w \in \calS$. Since $\Ss$ is a smooth representation, there is a compact open subgroup $L$ of $\tSp(W)$ such that $w$ is a fixed vector of $L$, i.e. $w \in \calS^L$. 
Let $K = L\cap \tG$. 
Since $V$ is an irreducible smooth representation of $\tG$, $V^K$ is finite dimensional, and is contained in $\calS^K$.
By applying Proposition \ref{Pdirectsum} to $V^K\subseteq \calS^K$ we see that $\calS^K = V^K \oplus (V^K)^\bot$ where $(V^K)^\bot$ is the orthogonal complement of $V^K$ in $\calS^K$.

Now $w \in \calS^K$, so $w = v + v'$ where $v \in V^K$ and $v' \in (V^K)^\bot$. Since $w$ and $v$ are $K$ - invariant, so is $v'=w-v$. Let $u \in V$. Then $\int_K \rho(k)u dk \in V^K$ and sine $(\ ,\ )_{\calS}$ is $\tG$-invariant,
\[
(u,v')_{\calS} = \int_K (\rho(k)u, \rho(k)v')_{\calS} dk = \int_K (\rho(k)u, v')_{\calS} dk
= (\int_K \rho(k)u dk, v')_{\calS} = 0.
\]
Thus $v' \in V^\bot$ and the lemma follows.

\end{proof}

\medskip

Denote by $\wt{\{1\}}\subseteq \tSp(\Wv)$ the preimage of the identity subgroup $\{1\}\subseteq \Sp(\Wv)$.
Let $\epsilon$ be the unique non-trivial character of the two element group $\wt{\{1\}}$.
Let $\calH(\tG)$ be the convolution algebra consisting of locally constant compactly supported complex valued functions on $\tG$ satisfying
\[
f(z g) = \epsilon(z) f(g)
\]
for $g \in \tG$ and $z \in \wt{\{1\}}$.


Let $(\rho, V)$ be a smooth, irreducible representation of $\tG$ which occurs as a quotient of $\Ss$ by a closed $\tG$ - invariant subspace.
Set
\[
\mathcal N_\rho=\bigcap_{T\in \Hom_{\tG}(\Ss, V)}\Ker T\,.
\]
Then, by \cite[Lemma III.3]{MoViWa} 
\begin{equation}\label{padichowemaximalquotient}
\calS/\mathcal N_\rho = V \otimes V'.
\end{equation}
for some smooth representation $(\rho', V')$ of $\tG'$.

Let $(\rho^\vee, V^\vee)$ denote the smooth dual representation of $(\rho,V)$. Since $V$ is a left $\calH(\tG)$-module, via $V\ni w\to \rho(f)w\in V$, $V^\vee$ is a right $\calH(\tG)$-module by defining for $v^\vee\in \V^\vee$ and $w\in V$, $(v^\vee f)(w)=v^\vee(\rho(f)w)$.

\begin{thm}\label{padicbigtheta}
In these terms,
\[
V'= V^{\vee} \otimes_{\calH(\tG)} \calS.
\]
\end{thm}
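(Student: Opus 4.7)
The plan is to construct a natural $\tG'$-equivariant map
\[
\Phi \colon V^{\vee}\otimes_{\calH(\tG)}\calS \longrightarrow V', \qquad \Phi(v^\vee\otimes s)=(v^\vee\otimes\id_{V'})(\bar s),
\]
where $\bar s\in V\otimes V'$ denotes the image of $s$ under the quotient $\calS\to\calS/\mathcal N_\rho$ of \eqref{padichowemaximalquotient}, and to show that $\Phi$ is an isomorphism. Well-definedness of $\Phi$ over $\calH(\tG)$ is a direct check from the definitions of the two actions, and $\tG'$-equivariance is immediate. Surjectivity is routine: fixing $v_0\in V$ and $v_0^\vee\in V^\vee$ with $v_0^\vee(v_0)=1$, any $v'\in V'$ lifts through the quotient map to some $s\in\calS$ with $\bar s=v_0\otimes v'$, whence $\Phi(v_0^\vee\otimes s)=v'$.

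The substance is injectivity, and the strategy is to upgrade Lemma \ref{L2} to a $\tG\times\tG'$-equivariant direct sum decomposition
\[
\calS=\mathcal N_\rho\oplus\mathcal N_\rho^\bot
\]
in which the quotient map restricts to an isomorphism $\mathcal N_\rho^\bot\xrightarrow{\sim}V\otimes V'$. Iteratively applying Lemma \ref{L2} to pairwise orthogonal irreducible $\tG$-submodules of $\calS$ isomorphic to $V$ builds up a $V$-isotypic subrepresentation, which by the very statement of Lemma \ref{L2} must be orthogonal to $\mathcal N_\rho$: if a copy $W\cong V$ sat inside $\mathcal N_\rho$, the orthogonal projection $\calS\to W$ furnished by Lemma \ref{L2} would be a $\tG$-map whose kernel excludes $W\subseteq\mathcal N_\rho$, contradicting $\mathcal N_\rho=\bigcap_T\ker T$. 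Combining this with the surjectivity of $\calS\to V\otimes V'$ identifies the isotypic sub with $\mathcal N_\rho^\bot$ and realizes the desired splitting.

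Under this splitting the tensor product decomposes as
\[
V^\vee\otimes_{\calH(\tG)}\calS \;=\; V^\vee\otimes_{\calH(\tG)}\mathcal N_\rho \;\oplus\; V^\vee\otimes_{\calH(\tG)}(V\otimes V'),
\]
and the identification $V^\vee\otimes_{\calH(\tG)}V\cong\C$ via the evaluation pairing (proved by descending to $K$-fixed vectors for a compact open $K\subseteq\tG$ with $V^K\ne 0$, using Jacobson density so that the image of $e_K\calH(\tG)e_K$ in $\End_\C(V^K)$ is the full matrix algebra $M_{\dim V^K}(\C)$, and invoking Morita equivalence) identifies the second summand with $V'$ and identifies $\Phi$ with the projection onto it. The main obstacle is then the vanishing of the first summand $V^\vee\otimes_{\calH(\tG)}\mathcal N_\rho=0$. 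The splitting combined with $\Hom_{\tG}(\calS,V)\cong(V')^*$ (every $\tG$-map to $V$ factors through $\calS/\mathcal N_\rho=V\otimes V'$, and $\Hom_{\tG}(V\otimes V',V)=(V')^*$ by Schur) forces $\Hom_{\tG}(\mathcal N_\rho,V)=0$; the same Morita/Jacobson reduction at the $K$-fixed level of an arbitrary smooth $\tG$-module $M$ yields the general equivalence $V^\vee\otimes_{\calH(\tG)}M=0\Longleftrightarrow\Hom_{\tG}(M,V)=0$, and applying this to $M=\mathcal N_\rho$ finishes the proof.
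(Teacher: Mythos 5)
Your map $\Phi$, its well-definedness over $\calH(\tG)$, its $\tG'$-equivariance, its surjectivity, and the identification $V^\vee\otimes_{\calH(\tG)}V\cong\C$ are all fine. The gap is the step on which your entire injectivity argument rests: the claimed $\tG\times\tG'$-equivariant splitting $\calS=\mathcal N_\rho\oplus\mathcal N_\rho^\bot$ with $\mathcal N_\rho^\bot\cong V\otimes V'$. Lemma \ref{L2} produces an orthogonal complement only for a $\tG$-submodule of $\calS$ that is \emph{already given} and irreducible (finite-dimensionality of $V^K$ is what makes the $K$-fixed reduction work); it provides no existence of submodules isomorphic to $V$, and $V$ is only known to be a \emph{quotient} of $\calS$. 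For smooth representations an irreducible quotient need not lift to a subrepresentation, and the invariant inner product does not repair this because $\calS$ is not complete: the orthogonal complement of the infinite-dimensional, non-admissible subspace $\mathcal N_\rho$ inside $\calS$ need not be a complement at all and need not surject onto $\calS/\mathcal N_\rho$. Your sentence ``combining this with the surjectivity of $\calS\to V\otimes V'$ identifies the isotypic sub with $\mathcal N_\rho^\bot$'' is exactly the unproved assertion: nothing shows that the sum of the embedded copies of $V$ (which a priori could be zero) maps onto $V\otimes V'$. Since both the direct-sum decomposition of $V^\vee\otimes_{\calH(\tG)}\calS$ and your deduction of $\Hom_{\tG}(\mathcal N_\rho,V)=0$ (you extend a map $\mathcal N_\rho\to V$ to $\calS$ by zero on the complement) are drawn from this splitting, the vanishing $V^\vee\otimes_{\calH(\tG)}\mathcal N_\rho=0$, which is the heart of the theorem, is not established; your closing equivalence $V^\vee\otimes_{\calH(\tG)}M=0\Leftrightarrow\Hom_{\tG}(M,V)=0$ is a reasonable tool, but it only shifts the burden to proving $\Hom_{\tG}(\mathcal N_\rho,V)=0$, for which no splitting-free argument is offered.

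Note how the paper is structured precisely to avoid this: it tensors the short exact sequence $0\to\mathcal N_\rho\to\calS\to V\otimes V'\to 0$ with $V^\vee$ over $\calH(\tG)$, reduces by right exactness and Schur's lemma to $V^{\vee}\otimes_{\calH(\tG)}\mathcal N_\rho=0$, and proves that vanishing by contradiction: a nonzero tensor product is argued to yield a copy of $V$ \emph{inside} $\mathcal N_\rho$, and then Lemma \ref{L2}, applied to this given submodule, makes $V$ a direct summand of $\calS$, so the projection lies in $\Hom_{\tG}(\calS,V)$ and does not vanish on $V$, contradicting $V\subseteq\mathcal N_\rho$. That argument needs only ``no copy of $V$ sits inside $\mathcal N_\rho$'', which is exactly what Lemma \ref{L2} can deliver; it never requires the quotient $V\otimes V'$ to lift to a submodule of $\calS$, which is what your proposal assumes without proof.
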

\begin{proof}
We have the following short exact sequence
\[
0\to\mathcal N_\rho\to \calS \to V \otimes V'\to 0\,.
\]
Hence the following sequence is exact:
\begin{equation}\label{3.1a}
V^{\vee}\otimes_{\calH(\tG)} \mathcal N_\rho \to V^{\vee} \otimes_{\calH(\tG)} \calS
\to V^{\vee} \otimes_{\calH(\tG)} V\otimes V'\to 0\,.
\end{equation}
By Schur's lemma,
\[
V^{\vee} \otimes_{\calH(\tG)} V \subseteq \Hom_{\tG}(V, V) =\C\,.
\]
This gives $V^\vee \otimes_{\calH(\tG)} V = \C$ and \eqref{3.1a} becomes
\begin{equation} \label{3.2}
V^{\vee}\otimes_{\calH(\tG)} \mathcal N_\rho \to V^{\vee} \otimes_{\calH(\tG)} \calS
\to V'\to 0\,.
\end{equation}
We will show later that
\begin{equation} \label{EqVeetimesN}
V^{\vee}\otimes_{\calH(\tG)} \mathcal N_\rho = 0.
\end{equation}
Then \eqref{3.2} gives $V^{\vee} \otimes_{\calH(\tG)} \calS
= V'$ and proves the theorem.

\medskip

We will prove \eqref{EqVeetimesN}. Suppose on the contrary, 
\[
V^{\vee}\otimes_{\calH(\tG)}\mathcal N_\rho\ne 0.
\]
Since $V^{\vee}\otimes_{\calH(\tG)}\mathcal N_\rho$ is contained in $\Hom_{\calH(\tG)}(\rho, \mathcal N_\rho)$, it means that $V \subseteq {\mathcal{N}}_\rho \subseteq \calS$.
By Lemma \ref{L2}
\[
\calS = V \oplus V^\perp.
\]
Thus $(\rho,V)$ is a quotient of $\calS$. This contradicts the inclusion $V \subseteq \mathcal N_\rho$. This proves~\eqref{EqVeetimesN}.
\end{proof}

\section{\bf The real Big Theta for Harish-Chandra modules}\label{Real Big Theta for Harish-Chandra modules}
Let $\Wv$ be a finite dimensional symplectic vector space over $\R$.
Let $\tSp(\Wv)$ be a metaplectic group with a maximal compact subgroup $\tU$.
Fix a non-trivial unitary character $\chi$ of the additive group of the field $\R$ of real numbers. 
Let $\Pp$ be  the Harish-Chandra module of 
the Weil representation of $\tSp(\Wv)$ associated to $\chi$. There is a positive definite $(\sp(\Wv),\tU)$-invariant inner product $(\ ,\ )_{\Pp}$ on $\Pp$. Consider an irreducible dual pair $\tG\cdot\tG'\subseteq \tSp$ and assume that $\tK=\tG\cap\tU$ is a maximal compact subgroup of $\tG$ and $\tK'=\tG'\cap\tU$ is a maximal comapct subgroup of $\tG'$.

Recall the convolution algebra $H(\g,\tilde K)$ of the $\tK$-finite distributions on $\tG$ supported in $\tK$, \cite{Knapp-Vogan}.  
Given an irreducible $(\g, \tilde \K)$ module
$(\rho, V)$, which occurs as a quotient of $\Pp$, set
\[
\mathcal N_\rho=\bigcap_{T\in \Hom_{(\g,\tK)}(\Pp, V)}\Ker T\,.
\]
Then there is a $(\g',\tK')$ module $(\rho', V')$, known as the big Howe quotient (or Big Theta), such that
\begin{equation}\label{realhowebigquotientdef}
\mathcal P/\mathcal N_\rho=V\otimes V'\,,
\end{equation}
see \cite[(2.5)]{HoweTrans}. 
For reader's convenience we shall verify this claim in the next section.
We shall view
\begin{equation}\label{isotypic_quotient}
V^{\vee}=\Hom(V,\C)_{\tilde K}
\end{equation}
as a right $H(\g,\tilde K)$ module.
\begin{thm}\label{realhowebigquotient}
In the above terms
\[
V'=V^{\vee}\otimes_{H(\g,\tilde K)}\mathcal P\,.
\]
\end{thm}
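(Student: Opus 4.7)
The plan is to imitate the proof of Theorem \ref{padicbigtheta} step by step, substituting the Hecke algebra $H(\g,\tK)$ for $\calH(\tG)$ and the Harish-Chandra module $\Pp$ for $\Ss$. I would start from the short exact sequence
\[
0 \to \mathcal N_\rho \to \Pp \to V \otimes V' \to 0
\]
obtained from \eqref{realhowebigquotientdef}, and apply the right-exact functor $V^\vee \otimes_{H(\g,\tK)} -$ to get
\[
V^\vee \otimes_{H(\g,\tK)} \mathcal N_\rho \to V^\vee \otimes_{H(\g,\tK)} \Pp \to V^\vee \otimes_{H(\g,\tK)} V \otimes V' \to 0.
\]
Schur's lemma for irreducible $(\g,\tK)$-modules gives $\Hom_{(\g,\tK)}(V,V) = \C$, and hence $V^\vee \otimes_{H(\g,\tK)} V = \C$, so the sequence reduces to
\[
V^\vee \otimes_{H(\g,\tK)} \mathcal N_\rho \to V^\vee \otimes_{H(\g,\tK)} \Pp \to V' \to 0.
\]
It therefore suffices to show that $V^\vee \otimes_{H(\g,\tK)} \mathcal N_\rho = 0$.

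The main obstacle is establishing a real analogue of Lemma \ref{L2}: for any irreducible $(\g,\tK)$-submodule $V$ of $\Pp$, one has $\Pp = V \oplus V^\perp$ as $(\g,\tK)$-modules, where $V^\perp$ is the orthogonal complement with respect to $(\,,\,)_{\Pp}$. Here the compact open subgroup $L$ used in the $p$-adic proof is replaced by the $\tK$-isotypic decomposition. Since $\Pp$ is admissible, each $\tK$-isotypic component $\Pp(\sigma)$ is finite-dimensional, and the $\tU$-invariance of the inner product makes distinct isotypics mutually orthogonal. Applying Proposition \ref{Pdirectsum} to the finite-dimensional subspace $V(\sigma) \subseteq \Pp(\sigma)$ within each $\tK$-type, and summing over $\sigma$, yields a $\tK$-stable decomposition $\Pp = V \oplus V^\perp$. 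For the $\g$-stability of $V^\perp$, I would use that $\g \subseteq \sp(\Wv)$ and that $(\,,\,)_{\Pp}$ is $\sp(\Wv)$-invariant: for $X \in \g$, $w \in V^\perp$, and $u \in V$,
\[
(u, X\cdot w)_{\Pp} = -(X\cdot u, w)_{\Pp} = 0
\]
because $X\cdot u \in V$. Hence $X\cdot w \in V^\perp$, so $V^\perp$ is a $(\g,\tK)$-submodule.

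With this orthogonal decomposition in hand, the contradiction step mirrors the $p$-adic argument. If $V^\vee \otimes_{H(\g,\tK)} \mathcal N_\rho \ne 0$, then by the same identification of this tensor with a subspace of $\Hom_{(\g,\tK)}(V, \mathcal N_\rho)$ invoked in the proof of Theorem \ref{padicbigtheta}, the irreducible $V$ embeds into $\mathcal N_\rho \subseteq \Pp$; the splitting $\Pp = V \oplus V^\perp$ then exhibits $V$ as a quotient of $\Pp$, contradicting $V \subseteq \mathcal N_\rho$. This forces $V^\vee \otimes_{H(\g,\tK)} \mathcal N_\rho = 0$ and completes the proof.
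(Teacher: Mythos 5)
Your proposal follows essentially the same route as the paper's proof: tensor the short exact sequence $0\to\mathcal N_\rho\to\Pp\to V\otimes V'\to 0$ with $V^{\vee}\otimes_{H(\g,\tilde K)}-$, use Schur's lemma to identify $V^{\vee}\otimes_{H(\g,\tilde K)}V=\C$, and kill $V^{\vee}\otimes_{H(\g,\tilde K)}\mathcal N_\rho$ by producing the $(\g,\tK)$-invariant splitting $\Pp=V\oplus V^\perp$ via Proposition \ref{Pdirectsum} applied $\tK$-type by $\tK$-type, exactly as in the paper. One caveat: your justification that each $\tK$-isotypic component of $\Pp$ is finite dimensional because ``$\Pp$ is admissible'' is unnecessary and false in general (as a $(\g,\tK)$-module the oscillator Harish-Chandra module typically has infinite $\tK$-multiplicities); what is actually needed, and what the paper uses, is only that each $V\cap\Pp_\pi$ is finite dimensional, which holds because the irreducible module $V$ is admissible.
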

\begin{proof}
We have the following short exact sequence
\[
0\to\mathcal N_\rho\to \Pp\to V\otimes V'\to 0\,.
\]
Hence the following sequence is exact:
\begin{equation}\label{3.1}
V^{\vee}\otimes_{H(\g,\tilde K)}\mathcal N_\rho
\to V^{\vee}\otimes_{H(\g,\tilde K)}\Pp
\to V^{\vee}\otimes_{H(\g,\tilde K)}\V\otimes V'\to 0\,.
\end{equation}
By Schur's lemma,
\[
V^{\vee}\otimes_{H(\g,\tilde K)}V=\C\,.
\]
Suppose
\[
V^{\vee}\otimes_{H(\g,\tilde K)}\mathcal N_\rho\ne 0\,.
\]
Then
\[
\Hom_{H(\g,\tilde K)}(\rho, \mathcal N_\rho)\ne 0\,.
\]
Hence $V$ occurs as a submodule of $\Pp$. Now $\Pp$ is equipped with a positive definite $\s\p$ - invariant inner product $(\ ,\ )_{\Pp}$. Hence $(\ ,\ )_{\mathcal P}$ is $(\g,\tK)$ - invariant. Therefore
\begin{equation}\label{decomposition of P}
\mathcal P=V\oplus V^\perp
\end{equation}
is a $(\g,\tilde K)$ - invariant decomposition. Since the space $\mathcal P$ is infinite dimensional we need a few words about the existence of the decomposition \eqref{decomposition of P}. First of all we have the direct sum orthogonal decompositions
\[
\mathcal P=\bigoplus_{\pi\in\hat{\wt \K}} \mathcal P_\pi\,,\ \ \ V=
\bigoplus_{\pi\in\hat{\wt \K}} V\cap\mathcal P_\pi\,,
\]
where each $V\cap\mathcal P_\pi$ is finite dimensional. 

By Proposition \ref{Pdirectsum} above, ${\mathcal{P}} = V\cap\mathcal P_\pi \oplus \left(V\cap\mathcal P_\pi\right)^\perp$ so
\[
\mathcal P_\pi = (V\cap\mathcal P_\pi \oplus \left(V\cap\mathcal P_\pi\right)^\perp) \cap \mathcal P_\pi = V\cap \mathcal P_\pi \oplus ( \left(V\cap\mathcal P_\pi\right)^\perp \cap \mathcal P_\pi) \,.
\]
Therefore 
\[
\mathcal P = \bigoplus_{\pi\in\hat{\wt \K}} \mathcal P_\pi = \bigoplus_{\pi\in\hat{\wt \K}} 
\left(V\cap\mathcal P_\pi \oplus (\left(V\cap\mathcal P_\pi\right)^\perp\cap \mathcal P_\pi) \right)\,.
\]
Since the isotypic components corresponding to different $\wt K$ types are orthogonal, we see that
\[
\mathcal P=
\left(\bigoplus_{\pi\in\hat{\wt \K}} V\cap\mathcal P_\pi \right)\oplus 
\left(\bigoplus_{\sigma\in\hat{\wt \K}} (V\cap\mathcal P_\sigma)^\perp\cap \mathcal P_\sigma \right)=V\oplus V^\perp
\]
and \eqref{decomposition of P} follows.
Thus $V$ is a quotient of $\mathcal P$. This contradicts the inclusion
$
V\subseteq \mathcal N_\rho
$.
Therefore 
\[
V^{\vee}\otimes_{H(\g,\tilde K)}\mathcal N_\rho=0
\]
and \eqref{3.1} reduces to
\[
0\to V^{\vee}\otimes_{H(\g,\tilde K)}\mathcal P\to V'\to 0\,.
\]
\end{proof}
Let $\Ss$ be the space of the smooth vectors of the Weil representation, 
containing the Harish-Chandra module $\Pp$. Then
\[
V^{\vee}\otimes_{H(\g,\tilde K)}\Ss\supseteq V^{\vee}\otimes_{H(\g,\tilde K)}\mathcal P
\]
is a $\tG'$-module. It seems very likely that the Harish-Chandra module of the left hand side of the above inclusion is equal to the right hand side, hence the left hand side is a globalization of the right hand side. 
In addition, we believe that this is the smooth Howe maximal quotient, \cite[(1.1)]{HoweTrans}, corresponding to a globalization of $(\rho', V')$. Unfortunately we don't have any proof of either.

\section{\bf The isotypic quotient of a Harish-Chandra module}

In this section we copy \cite[Lemma III.3]{MoViWa}, with the proof, in terms of Harish-Chandra modules.

Let $\G$ be a real reductive group with a maximal compact subgraoup $\K\subseteq \G$ and the Lie algebra $\g$. Let $\Uv$ be a left $(\g,\K)$ module. Then 
\[
\Uv^c=\Hom(\Uv,\C)_\K
\]
is a left $(\g,\K)$ module. The map
\[
\Uv^c\times \Uv\ni (\psi, u)\to \psi(u)\in \C 
\]
extends to a non-zero element
\[
P\in \Hom_{(\g,\K)}(\Uv^c\otimes\Uv,\C).
\]
\begin{lem}\label{Schur-Dixmier}
If $\Uv$ is irreducible then
\[
\Hom_{(\g,\K)}(\Uv^c\otimes\Uv,\C)=\C P\,.
\]
\end{lem}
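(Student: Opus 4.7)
The plan is to convert the statement into an endomorphism problem for $\Uv$ by adjunction, and then apply the Dixmier–Schur lemma for irreducible $(\g,\K)$-modules.

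First I would fix $\phi\in \Hom_{(\g,\K)}(\Uv^c\otimes \Uv,\C)$ and define
\[
T_\phi \colon \Uv \longrightarrow \Hom_\C(\Uv^c,\C), \qquad T_\phi(u)(\psi) = \phi(\psi\otimes u).
\]
Next I would check that $T_\phi$ is $(\g,\K)$-equivariant when the target carries the standard contragredient action. For $\K$-equivariance, $(kT_\phi(u))(\psi)=T_\phi(u)(k^{-1}\psi)=\phi(k^{-1}\psi\otimes u)=\phi(\psi\otimes ku)=T_\phi(ku)(\psi)$, using the $\K$-invariance of $\phi$. For $\g$-equivariance, one uses that the $\g$-action on $\Uv^c$ and on $\Hom(\Uv^c,\C)$ is contragredient: the two sign changes cancel against the $\g$-invariance identity $\phi(X\psi\otimes u)+\phi(\psi\otimes Xu)=0$.

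Since every $u\in \Uv$ lies in a finite sum of $\K$-isotypic components and $T_\phi$ is $\K$-equivariant, the image $T_\phi(u)$ is automatically $\K$-finite. Hence $T_\phi$ takes values in
\[
\Uv^{cc} \;=\; \Hom(\Uv^c,\C)_{\K}.
\]
Because $\Uv$ is irreducible, it is admissible by Harish-Chandra's theorem, so $\dim \Uv(\tau)<\infty$ for every $\K$-type $\tau$, and the canonical map $\Uv \to \Uv^{cc}$, $u\mapsto (\psi\mapsto \psi(u))$, is an isomorphism of $(\g,\K)$-modules (it is injective because $\Uv^c$ separates points on each $\K$-isotypic component, and a dimension count on each isotypic component shows surjectivity). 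Under this identification, $T_\phi$ becomes an element of $\End_{(\g,\K)}(\Uv)$.

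Finally, by Dixmier's version of Schur's lemma for irreducible $(\g,\K)$-modules, $\End_{(\g,\K)}(\Uv)=\C\cdot \id_{\Uv}$. Hence $T_\phi = c\,\id_\Uv$ for some $c\in\C$, which unwinds to $\phi(\psi\otimes u) = c\,\psi(u) = c\, P(\psi\otimes u)$, so $\phi = cP$. Noting that $P\neq 0$ (it is just the evaluation pairing, which separates points by definition of $\Uv^c$), this shows $\Hom_{(\g,\K)}(\Uv^c\otimes \Uv,\C)=\C P$. The only substantive ingredient beyond formal adjunctions is the admissibility/reflexivity statement $\Uv\cong \Uv^{cc}$; the rest of the argument is a direct transcription of the classical Schur-type reasoning used in \cite[Lemma III.3]{MoViWa}.
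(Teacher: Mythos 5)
Your argument is correct and is essentially the paper's proof: both convert $\phi$ into an intertwining endomorphism by tensor--hom adjunction and then finish with the Dixmier--Schur lemma, the only (cosmetic) difference being that you adjoin on the $\Uv$ factor and invoke admissibility of the irreducible module $\Uv$ to identify $\Uv\cong\Uv^{cc}$ and apply Schur to $\End_{(\g,\K)}(\Uv)$, whereas the paper adjoins on the $\Uv^c$ factor and cites the irreducibility of $\Uv^c$ (Knapp--Vogan) to apply Schur to $\End_{(\g,\K)}(\Uv^c)$. Both auxiliary facts rest on the same admissibility input, so the proposals are interchangeable.
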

\begin{proof}
In terms of \cite[Proposition 2.47]{Knapp-Vogan}, 
\[
\Hom_{(\g,\K)}(\Uv^c\otimes\Uv,\C)=\Hom_{(\g,\K)}(\Uv^c,\Uv^c)\,.
\]
We see from Proposition 2.57 in the same book that $\Uv^c$ is also irreducible. Finally, by Schur-Dixmier lemma \cite[A.12]{Knapp-Vogan}
\[
\Hom_{(\g,\K)}(\Uv^c,\Uv^c)=\C\,.
\]
\end{proof}
Suppose $\Uv_0\subseteq\Uv$ is a $(\g,\K)$ submodule such that $(\g,\K)$ acts trivially on the quotient $\Uv/\Uv_0$. Let $\Uv_1\subseteq \Uv$ be the intersection of all the submodules $\Uv_0\subseteq \Uv$ with the above property. Then $\Uv/\Uv_1$ is a maximal trivial $(\g,\K)$ quotient of $\Uv$. Following \cite[III]{MoViWa} in the $p$-adic case, we shall denote it by 
\[
\Uv[\g,\K]\,.
\]
(A possible explanation for this notation is that $\Uv^{[\g,\K]}$ looks like the $(\g,\K)$ invariant subspace and $\Uv_{[\g,\K]}$
like an ``$(\g,\K)$ isotypic component". Hence $\Uv[\g,\K]$ is a reasonable compromise.)
\begin{lem}\label{Schur-Dixmier again}
If $\Uv$ is irreducible then
\[
(\Uv^c\otimes\Uv)[\g,\K]=\C\,.
\]
\end{lem}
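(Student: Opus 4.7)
The plan is to deduce the statement directly from Lemma \ref{Schur-Dixmier} by dualizing via the universal property that defines the maximal trivial quotient $\Uv[\g,\K]$. By construction, $(\Uv^c\otimes \Uv)[\g,\K]$ is a trivial $(\g,\K)$-module, and any $(\g,\K)$-equivariant map from $\Uv^c\otimes \Uv$ to a trivial module factors uniquely through this quotient. Applied to the target $\C$, this furnishes a canonical linear isomorphism
\[
\Hom_{(\g,\K)}(\Uv^c\otimes \Uv,\C)\;\cong\;\Hom_\C\!\left((\Uv^c\otimes \Uv)[\g,\K],\C\right).
\]

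Lemma \ref{Schur-Dixmier} already identifies the left-hand side with $\C\cdot P$, so it has dimension exactly one. Hence the linear dual of the vector space $(\Uv^c\otimes \Uv)[\g,\K]$ is one-dimensional. A vector space $W$ with $\dim \Hom_\C(W,\C)=1$ is necessarily one-dimensional: a zero space has zero-dimensional dual, while any space of dimension at least two admits two linearly independent functionals to $\C$. Since the quotient carries the trivial $(\g,\K)$-action by construction, it is isomorphic to $\C$ as a $(\g,\K)$-module, which is the claim.

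I do not expect a real obstacle in this argument; the two previous ingredients, namely Lemma \ref{Schur-Dixmier} and the universal property of the $[\g,\K]$-quotient, do all the work. The only subtlety worth recording is the implicit nonvanishing: the pairing $P$ is nonzero, so it descends to a nonzero, hence surjective, linear map from $(\Uv^c\otimes \Uv)[\g,\K]$ onto $\C$, confirming that the quotient is truly one-dimensional rather than zero. This also pins down the identification $(\Uv^c\otimes \Uv)[\g,\K] = \C$ concretely as the image of the pairing $P$.
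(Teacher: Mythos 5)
Your proof is correct and takes essentially the same route as the paper: both reduce the claim to Lemma \ref{Schur-Dixmier} through the correspondence between $(\g,\K)$-maps $\Uv^c\otimes\Uv\to\C$ and the maximal trivial quotient $(\Uv^c\otimes\Uv)[\g,\K]$. Your dualized formulation (the linear dual of the quotient is one-dimensional, hence so is the quotient) is if anything slightly more explicit than the paper's ``the kernel $\Nv$ is unique and the claim follows,'' since it directly rules out trivial quotients of dimension $0$ or $\ge 2$.
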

\begin{proof}
Let
\[
S\in \Hom_{(\g,\K)}(\Uv^c\otimes\Uv,\C)
\]
be a nonzero map. We set $\Nv=\Ker S$. There is a nonzero constant $c$ such that $S=cP$. Hence $\Nv = \Ker S = \Ker P$. Thus $\Nv$ is unique and the claim follows.
\end{proof}
\begin{pro}\label{proposition 1}
Let $\Uv$ be a $(\g,\K)$ module and let $\Vv$ be an irreducible quotient of $\Uv$. Set
\[
\Vv ''=(\Vv^c\otimes \Uv)[\g,\K]\,.
\]
Assume that
\begin{equation}\label{intersection}
\bigcap_{T\in\Hom_{(\g,\K)}(\Uv, \Vv)}\Ker T=0\,.
\end{equation}
Then there is a subspace $\Vv '\subseteq \Vv ''$ such that $\Uv$ is isomorphic to $\Vv\otimes \Vv '$ as a $(\g,\K)$ module, with the trivial $(\g,\K)$ action on $\Vv'$.
\end{pro}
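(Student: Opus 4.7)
The plan is to take $\Vv' := \Hom_{(\g,\K)}(\Vv, \Uv)$ equipped with the trivial $(\g,\K)$-action, construct a natural embedding $\iota\colon \Vv' \hookrightarrow \Vv''$, and verify that the evaluation map
\[
\mu\colon \Vv\otimes\Vv' \longrightarrow \Uv,\qquad v\otimes f \longmapsto f(v),
\]
is a $(\g,\K)$-module isomorphism (with trivial action on $\Vv'$).

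The embedding $\iota$ is produced as follows. For each $f \in \Vv'$, the map $\id\otimes f \colon \Vv^c\otimes\Vv \to \Vv^c\otimes\Uv$ is $(\g,\K)$-equivariant and therefore descends to a linear map $(\Vv^c\otimes\Vv)[\g,\K] \to (\Vv^c\otimes\Uv)[\g,\K] = \Vv''$. By Lemma~\ref{Schur-Dixmier again} the source is $\C$, so one sets $\iota(f)$ to be the image of $1$. Injectivity of $\iota$ is immediate: pairing $\iota(f)$ against any $T \in (\Vv'')^* = \Hom_{(\g,\K)}(\Uv,\Vv)$ returns the scalar $Tf \in \C$, and if $Tf$ vanishes for every $T$ then $f(\Vv) \subseteq \bigcap_T \Ker T = 0$ by assumption~\eqref{intersection}, forcing $f = 0$.

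Injectivity of $\mu$ follows by a Schur-style argument. Suppose $\sum_{i=1}^n v_i \otimes f_i \in \Ker\mu$ with the $f_i$ linearly independent. For any $T \in \Hom_{(\g,\K)}(\Uv,\Vv)$, the composition $T\circ f_i \in \Hom_{(\g,\K)}(\Vv,\Vv) = \C\cdot \id_\Vv$ by Lemma~\ref{Schur-Dixmier}, so $T\circ f_i = c_i(T)\,\id_\Vv$ for scalars $c_i(T)$. Applying $T$ to $\sum f_i(v_i) = 0$ gives $\sum_i c_i(T) v_i = 0$. The assignment $T \mapsto (c_i(T))_{i=1}^n$ surjects onto $\C^n$: if some nonzero $(a_i)$ were orthogonal to its image, then $\sum_i a_i f_i(\Vv) \subseteq \bigcap_T \Ker T = 0$, contradicting the linear independence of the $f_i$. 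Choosing $T$ with $(c_i(T)) = e_j$ forces $v_j = 0$, so $\Ker\mu = 0$.

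The main obstacle is surjectivity of $\mu$, i.e., showing that every $u \in \Uv$ is a finite sum $\sum_j f_j(w_j)$ with $f_j \in \Vv'$ and $w_j \in \Vv$. The plan is to use the Harish-Chandra module structure: fix a $\K$-type $\tau$ occurring in $\Vv$ together with a $\K$-finite generator $v_0 \in \Vv[\tau]$, and for $u \in \Uv$ exploit the finite $\K$-support of $u$ and the finite-dimensionality of each $\Vv[\tau_i]$. Since $\Uv \hookrightarrow \prod_T \Vv$ by \eqref{intersection} and $u$ is $\K$-finite, the image of $u$ lies in $\prod_T \bigoplus_i \Vv[\tau_i]$; the cyclicity of $v_0$ combined with the irreducibility of $\Vv$ should then allow one to untangle this image into a finite sum coming from only finitely many maps $\Vv \to \Uv$. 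This finite-support step is the delicate point, where the admissibility of $\Vv$ plays an essential role.
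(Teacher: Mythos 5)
Your construction of $\iota$ and your injectivity argument for $\mu$ are fine (both are standard Schur-type arguments and use \eqref{intersection} correctly), but the proof has a genuine gap exactly where you flag it: surjectivity of $\mu$ is not proved, and it is not a technical loose end --- it \emph{is} the content of the proposition. Saying that $\Uv$ is covered by the evaluation map $\Vv\otimes\Hom_{(\g,\K)}(\Vv,\Uv)\to\Uv$ is the same as saying that $\Uv$ is a sum of images of $(\g,\K)$-maps from $\Vv$, i.e.\ that $\Uv$ is $\Vv$-isotypic and semisimple; that is precisely the conclusion. Your sketch does not supply this: the hypothesis \eqref{intersection} gives an embedding of $\Uv$ into the (possibly infinite) product $\prod_T\Vv$, but a $\K$-finite vector of $\Uv$ sits in $\prod_T\bigoplus_i\Vv[\tau_i]$ with \emph{no} control over how many coordinates are nonzero, and an embedding into a product of copies of $\Vv$ does not by itself produce any nonzero map from $\Vv$ \emph{into} $\Uv$, let alone enough of them to exhaust $\Uv$. ``Untangling'' this into finitely many maps $\Vv\to\Uv$ is the whole difficulty, and cyclicity plus irreducibility of $\Vv$ does not obviously do it; as written, the plan essentially restates the problem.

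The paper avoids this obstacle by running the argument in the opposite direction. It defines $\phi\colon\Uv\to\Hom(\Vv^c,\Vv'')$ by $\phi(u)(v^c)=p(v^c\otimes u)$, where $p\colon\Vv^c\otimes\Uv\to\Vv''$ is the quotient map. Injectivity of $\phi$ follows directly from \eqref{intersection} together with Lemma \ref{Schur-Dixmier again} (each $T\in\Hom_{(\g,\K)}(\Uv,\Vv)$ induces a functional $T'$ on $\Vv''$ with $T'(p(v^c\otimes u))=v^c(T(u))$). The $\K$-isotypic idempotents $\chi_\pi$, together with $\dim(\chi_\pi\Vv)<\infty$, show that $\phi$ actually lands in the algebraic tensor product $\Vv\otimes\Vv''$. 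Finally, a Jacobson-density-style argument inside $\Vv\otimes\Vv''$ (isolating each $v_i$ in a vector $\sum v_i\otimes v_i''\in\phi(\Uv)$ with the $v_i$ linearly independent) shows that $\phi(\Uv)=\Vv\otimes\Vv'$ with $\Vv'=\{v''\in\Vv'':\ \Vv\otimes v''\subseteq\phi(\Uv)\}$. Because the module is embedded \emph{into} a tensor product and the image is then characterized, no surjectivity statement onto $\Uv$ of an evaluation map is ever needed. If you want to salvage your approach, you would have to prove that $\Uv$ equals the sum of the images of all maps $\Vv\to\Uv$, and the most natural way to do that is essentially to redo the paper's embedding-plus-density argument; alternatively, adopt the paper's map $\phi$ from the start.
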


\noindent {\bf Remark.} The pair $(\g,\K)$ acts trivially on $\Vv''$ and $\Vv'$. The proposition says that $\Uv$ is isomorphic to a direct sum of possibly infinitely many copies of $\Vv$.
\begin{proof}
Let
\[
p \colon \Vv^c\otimes \Uv\to \Vv''
\]
be the projection (quotient map). Define a map 
\[
\phi \colon \Uv\to \Hom(\Vv^c,\Vv '')
\]
by
\[
\phi(u)(v^c)=p(v^c\otimes u)\,.
\]
A routine argument shows that $\phi$ intertwines the actions of $(\g,\K)$ on both sides. Let $T \colon \Uv\to\Vv$ be a $(\g,\K)$ intertwining map. Denote by
\[
p_1 \colon \Vv^c\otimes \Vv\to (\Vv^c\otimes \Vv)[\g,\K]
\]
the quotient map. The composition
\[
\Vv^c\otimes \Uv\overset{I\otimes T}{\rightarrow}\Vv^c\otimes \Vv
\overset{p_1}{\rightarrow}(\Vv^c\otimes \Vv)[\g,\K]
\]
is also $(\g,\K)$ intertwining. It factors through the projection $p$ and the result is a $(\g,\K)$ intertwining map
\[
T \colon \Vv''\to (\Vv^c\otimes \Vv)[\g,\K]\,.
\]
Let $0\ne u\in \Uv$. Then, by \eqref{intersection}, there is $T\in\Hom_{(\g,\K)}(\Uv,\Vv)$ and $v^c\in\Vv^c$ such that
\[
v^c(T(u))\ne 0\,.
\]
Hence,
\[
T'(p(v^c\otimes u))=p_1\circ(I\otimes T)(v^c\otimes u)=p_1(v^c\otimes u)=v^c(T(u))\,,
\]
where the last equality follows from Lemma \ref{Schur-Dixmier again}. Therefore
\[
\phi(u)(v^c) = p(v^c\otimes u) \neq 0\,.
\]
Hence $\phi\ne 0$. Even more, the above argument shows that $\phi$ is injective.

Fix an element $u$ in a $\K$ isotypic subspace of $\Uv$. Suppose this subspace is of type $\pi\in\hat\K$. Then $\chi_\pi=\dim\pi\cdot \Theta_\pi$ is the idempotent that realizes the projection onto that subspace.

Notice that for $k\in\K$,
\[
p(k\cdot v^c\otimes k\cdot u)=p(v^c\otimes u)\,.
\]
Hence
\[
p(v^c\otimes k\cdot u)=p(k^{-1}\cdot v^c\otimes u)\,.
\]
Therefore
\[
p(v^c\otimes u)=p(v^c\otimes \chi_\pi \cdot u)=p(\chi_{\pi^c} \cdot v^c\otimes u)\,.
\]
As a consequence,
\[
\phi(u)\in \Hom(\chi_{\pi^c}\cdot \Vv^c,\Vv '')=(\chi_\pi\Vv)\otimes \Vv ''\,,
\]
because $\dim\,(\chi_\pi\Vv)<\infty$. This identification allows us to see $\phi$ as a map with the range in $\Vv\otimes \Vv ''$. 
Altogether,
\[
\phi \colon \Uv\to \Vv\otimes \Vv ''
\]
is injective. Thus
\[
\phi(\Uv)\subseteq \Vv\otimes \Vv ''
\]
is a $(\g,\K)$ submodule. 

Pick a non-zero vector
\[
\sum_{i=1}^n v_i\otimes v_i ''\in \phi(\Uv)
\]
with $v_1$, ..., $v_n$ linearly independent and $v_1 ''$, ..., $v_n ''$ non-zero. Since $(\g,\K)$ acts irreducibely on $\Vv$,
\[
\End(\C v _1+...+\C v_n)
\]
is in the image of this action. Thus we may isolate each $v_i$ and map the other $v_j$ to zero, while staying in $\phi(\Uv)$. Therefore
\[
0\ne v_i\otimes v_i''\in\phi(\Uv)\,.
\]
Applying the $(\g, \K)$ action on $\Vv$ we see that
\[
\Vv\otimes v_i''\subseteq \phi(\Uv)\,.
\]
Hence,
\[
\Vv '=\{v''\in\Vv '';\ \Vv \otimes v''\subseteq \phi(\Uv)\}\ne 0
\]
and our argument shows that
$\phi(\Uv)\subseteq \Vv\otimes \Vv'$.
\end{proof}
The title of this section refers to the following statement.

\begin{thm}
Let $\G'$ be another real reductive group with a maximal compact subgroup $\K'\subseteq \G'$. Suppose $\Uv$ is a $(\g\oplus \g', \K\times \K')$ module such that the actions of $(\g, \K)$  and $(\g', \K')$  commute. Let $\V$ be an irreducible $(\g,\K)$ quotient of $\Uv$. Set
\[
\Nv=\bigcap_{T\in \Hom_{(\g,\K)}(\Uv, \Vv)}\Ker T\,.
\]
Then there is a $(\g', \K')$ module $\Vv_1'$ such that $\Uv/\Nv$ is isomorphic to $\Vv\otimes \Vv_1'$ as a $(\g, \K)\times(\g', \K')$ module:
\[
\Uv/\Vv=\Vv\otimes \Vv_1'\,.
\]
\end{thm}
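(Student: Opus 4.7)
The plan is to reduce the theorem to Proposition \ref{proposition 1} applied to $\Uv/\Nv$, and then to transport the commuting $(\g',\K')$-action across the resulting tensor product decomposition.

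First I would observe that $\Uv/\Nv$, viewed as a $(\g,\K)$-module, still has $\Vv$ as an irreducible quotient, because every $T\in\Hom_{(\g,\K)}(\Uv,\Vv)$ vanishes on $\Nv$ and hence factors through $\Uv/\Nv$. Moreover, by the very definition of $\Nv$, the intersection condition \eqref{intersection} holds for $\Uv/\Nv$ in place of $\Uv$. Proposition \ref{proposition 1} therefore produces a subspace $\Vv_1'\subseteq (\Vv^c\otimes\Uv/\Nv)[\g,\K]$ and an isomorphism of $(\g,\K)$-modules
\[
\Uv/\Nv \cong \Vv\otimes \Vv_1'\,,
\]
with $(\g,\K)$ acting trivially on $\Vv_1'$.

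Next I would upgrade this to an isomorphism of $(\g\oplus\g',\K\times\K')$-modules. The key point is that $\Nv$ is $(\g',\K')$-stable: given $T\in\Hom_{(\g,\K)}(\Uv,\Vv)$ and $k'\in\K'$ or $X'\in\g'$, the commutativity of the two actions makes $T\circ \rho'(k'^{-1})$, respectively $u\mapsto T(\rho'(X')u)$, again $(\g,\K)$-equivariant, so the intersection $\Nv=\bigcap \Ker T$ is preserved. Hence $(\g',\K')$ acts on $\Uv/\Nv$, this action commutes with $(\g,\K)$, and after transport along the isomorphism above each element of $\g'$ or $\K'$ acts on $\Vv\otimes \Vv_1'$ through a $(\g,\K)$-module endomorphism.

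Finally I would invoke a Schur-type identification
\[
\End_{(\g,\K)}(\Vv\otimes \Vv_1') = \id_{\Vv}\otimes\End_{\C}(\Vv_1')\,,
\]
which forces the $(\g',\K')$-action to take the form $\id_{\Vv}\otimes\rho_1'$ for a representation $\rho_1'$ of $(\g',\K')$ on $\Vv_1'$; this is the desired module structure on $\Vv_1'$. The main obstacle is precisely this last identification, since $\Vv_1'$ may be infinite dimensional. I would handle it by writing $\Vv_1'$ as a direct sum of one-dimensional subspaces $\C v_\alpha''$, applying Lemma \ref{Schur-Dixmier again} and the irreducibility of $\Vv$ to each summand $\Vv\otimes \C v_\alpha''$, and using the $\K$-isotypic decomposition of $\Vv$ to assemble the summand-wise maps into a single endomorphism, much as in the concluding step of the proof of Proposition \ref{proposition 1}.
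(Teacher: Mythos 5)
Your proposal is correct, and its first half is exactly the paper's argument: you check that $\Nv$ is stable under the commuting $(\g',\K')$-action (the paper merely asserts that $\Nv$ is a $(\g\oplus\g',\K\times\K')$-submodule; your observation that $T\circ\rho'(k')$ and $u\mapsto T(\rho'(X')u)$ are again $(\g,\K)$-equivariant is the reason), that $\Vv$ is still an irreducible quotient of $\Uv/\Nv$, and that \eqref{intersection} holds for $\Uv/\Nv$, so Proposition \ref{proposition 1} applies. Where you diverge is in how the $(\g',\K')$-structure on the multiplicity space is produced. The paper opens up the proof of Proposition \ref{proposition 1}: the space $\Vv'=\{v''\in(\Vv^c\otimes\Uv/\Nv)[\g,\K]\,;\ \Vv\otimes v''\subseteq\phi(\Uv/\Nv)\}$ is stable under the action that $(\g',\K')$ induces on $\Vv^c\otimes(\Uv/\Nv)$ (trivially on the first factor) and hence on its maximal trivial quotient, so $\Vv_1'$ carries a $(\g',\K')$-action by construction. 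You instead treat the proposition as a black box and transport the action, invoking the rigidity $\End_{(\g,\K)}(\Vv\otimes\Vv_1')=\id_\Vv\otimes\End_\C(\Vv_1')$. That identification does hold for infinite-dimensional $\Vv_1'$, and the cleanest way to get it is slightly different from your isotypic assembly: for a $(\g,\K)$-endomorphism $\Phi$, fix $0\neq v\in\Vv$ and a basis $\{v_\alpha''\}$ of $\Vv_1'$; then $\Phi(v\otimes v'')$ has finitely many nonzero components in $\bigoplus_\alpha\Vv\otimes\C v_\alpha''$, each composite $\Vv\otimes\C v''\to\Vv\otimes\C v_\alpha''$ is a scalar by Schur's lemma for irreducible $(\g,\K)$-modules (the same input as Lemma \ref{Schur-Dixmier again}), and these scalars define a linear map $A$ with $\Phi=\id_\Vv\otimes A$. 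Your route buys independence from the internals of Proposition \ref{proposition 1} and makes the descent of the $(\g',\K')$-action explicit where the paper is silent; the paper's route buys brevity, since stability of $\Vv'$ needs no separate Schur-type argument. The only remaining routine point in your version is that $(\rho_1',\Vv_1')$ is a genuine $(\g',\K')$-module (local $\K'$-finiteness and the compatibility of the $\k'$- and $\K'$-actions), which follows by restricting the given action on $\Uv/\Nv\cong\Vv\otimes\Vv_1'$ to the $(\g',\K')$-stable subspace $v\otimes\Vv_1'$ for any fixed nonzero $v$.
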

\begin{proof}
The intersection $\Nv$ is a $(\g\oplus \g', \K\times \K')$ submodule of $\Uv$. Hence, $\Uv/\Nv$ is a $(\g, \K)\times(\g',\K')$ module on which the two actions commute. Also, $\Vv$ is an irreducible $(\g, \K)$ quotient of $\Uv/\Nv$. Furthermore
\[
\bigcap_{T\in \Hom_{(\g,\K)}(\Uv/\Nv,\Vv)}\Ker T=0\,.
\]
Hence Proposition \ref{proposition 1} applies. The space $\Vv_1'$ constructed in the proof of that proposition is a $(\g',\K')$ module and as a result
\[
\Uv/\Nv=\Vv\otimes \Vv_1'
\]
as a $(\g, \K)\times(\g',\K')$ module.
\end{proof}

\biblio

\begin{thebibliography}{MVW87}

\bibitem[BS17]{SunBao_2017}
Yixin Babo and BinYong Sun.
\newblock {Coincidence of algebraic and smooth theta correspondences}.
\newblock {\em {Representation Theory}}, {21}:{458--466}, {2017}.

\bibitem[How89]{HoweTrans}
R.~Howe.
\newblock {Transcending Classical Invariant Theory}.
\newblock {\em {J. Amer. Math. Soc. 2}}, {2}:{535--552}, {1989}.

\bibitem[KV95]{Knapp-Vogan}
A.~Knapp and D.~Vogan.
\newblock {\em Cohomological Induction and Unitary Representations}.
\newblock Princeton Mathematical Series. Princeton University Press, Princeton,
  New Jersey, 1995.

\bibitem[MVW87]{MoViWa}
Colette Moeglin, Marie-France Vign\'{e}ras, and Jean-Loup Waldspurger.
\newblock {\em Correspondances de {H}owe sur un corps {$p$}-adique}, volume
  1291 of {\em Lecture Notes in Mathematics}.
\newblock Springer-Verlag, Berlin, 1987.

\end{thebibliography}
\end{document}